\theoremstyle{plain}
\newtheorem{theorem}{Theorem}
\newtheorem{proposition}{Proposition}
\newtheorem{remark}{Remark}
\def\du{\partial_u}
\def\dv{\partial_v}
\def\<{\langle}
\def\>{\rangle}
\def\<{ \left < }
\def\>{ \right > }
\def\R{\mathbb{R}}
\def\E{\mathbb{E}}
\def\H{\mathbb{H}}
\begin{document}

\title[]
{Surfaces in $\E^3$ making constant angle with Killing vector fields}
\author[M.I. Munteanu]{Marian Ioan Munteanu}
\author[A.I. Nistor]{Ana-Irina Nistor}

\date{ }

\email{(M.I.M.) marian.ioan.munteanu (at) gmail.com}
\email{(A.I.N.) ana.irina.nistor (at) gmail.com}

\begin{abstract}
In the present paper we classify curves and surfaces in Euclidean $3-$space which make constant angle with a certain Killing vector field.
Moreover, we characterize the catenoid and Dini's surface in terms of constant angle surfaces.
\end{abstract}

\keywords{Lancret problem, angle of two curves, Killing vector field, space form, Euclidean space, Dini's surface}

\subjclass[2000]{53B25}
\date{\today}

\maketitle

\section{Introduction}
The geometry of curves and surfaces in $\E^3$ represented for many years a popular topic in the field of classical differential geometry.
Global and local properties were studied in several books and new invariants were defined for curves as well as for surfaces.

An old and important problem in differential geometry of curves and surfaces is that of Bertrand--Lancret--de Saint Venant saying that a curve in the
Euclidean $3-$space $\E^3$ is of constant slope, namely its tangent makes a constant angle with a fixed direction, if and only if the ratio of
torsion $\tau$ and curvature $\kappa$ is constant. These curves are also called \emph{generalized helices}. If both $\tau$ and $\kappa$ are
non-zero constants the curve is called \emph{circular helix}. Other interesting curves in $\E^3$ are \emph{slant helices}, characterized by the
property that the principal normals make a constant angle with a fixed direction.

The problem of Bertrand--Lancret--de Saint Venant was generalized for curves in other 3-dimensional manifolds -- in particular space forms or
Sasakian manifolds. Such a curve has the property that its tangent makes constant angle with a parallel vector field on the manifold or with a
Killing vector field respectively. For example, a curve $\gamma(s)$ in a $3-$dimensional space form is called a \emph{general helix} if there
exists a Killing vector field $V(s)$ with constant length along $\gamma$ and such that the angle between $V$ and $\gamma'$ is a non-zero
constant. See e.g. \cite{mn:Bar97}, \cite{mn:IL08}. Moreover, in \cite{mn:ABG04} it is shown that general helices in a $3-$dimensional space form
are extremal curves of a functional involving a linear combination of the curvature, the torsion (as functions) and a constant. On the other
hand, in a contact $3-$manifold, a curve is called a {\em slant curve} if the angle between its tangent and the Reeb vector field is constant
(see e.g. \cite{mn:CIL06}, \cite{mn:BB92}).
General helices, also called Lancret curves are used in many applications (e.g. \cite{mn:BM07}, \cite{mn:FHMS04}).

Bertrand studied curves in Euclidean space $\E^3$ whose principal normals are also the principal normals of another curve (called Bertrand mate).
Such a curve is nowadays called a Bertrand curve and it is characterized by a linear relation between its curvature and torsion.
Bertrand mates in $\E^3$ are particular examples of offset curves used in computer-aided geometric design (see e.g. \cite{mn:NM88}). Due to
previous characterization, one may say that Bertrand curves are strictly related with Weigarten surfaces. Possible applications of Weingarten
surfaces are found in \cite{mn:BG96}.

Passing from curves to surfaces, a natural question is to find which surfaces in $3-$dimensional space make constant angle with certain vector
fields. For example in \cite{mn:MN09} it is given a new approach to classify surfaces in $\E^3$ making a constant angle with a fixed direction.
These surfaces were called \emph{constant angle surfaces} and they appear in the study of liquid crystals and of layered fluids \cite{mn:CS07}.
Recently, the geometry of constant angle surfaces was developed in other $3-$dimensional spaces as ${\mathbb{S}}^2\times\R$ \cite{mn:DFVV07},
$\H^2\times\R$ \cite{mn:DM09}, the Heisenberg group $Nil_3$ \cite{mn:FMV10}, $Sol_3$ \cite{mn:LM10a}, Minkowski $3-$space \cite{mn:LM10}, a.s.o. On
the other hand, in analogy with the geometry of logarithmic spirals, in \cite{mn:Mun10} there were found all surfaces in $\E^3$ whose normals
make constant angle with the position vector. Particular surfaces with beautiful shapes were obtained.

The main part of the paper is devoted to the study of surfaces whose normals make constant angle $\theta$ with Killing vectors in $\E^3$. We
obtain the complete classification of these surfaces, which includes: halfplanes having $z-$axis as boundary, rotational surfaces around $z-$axis,
right cylinders over logarithmic spirals and Dini's surfaces. Regarding Dini's surfaces as a particular kind of helicoidal surfaces,
on this broader class of surfaces many studies were made, let us mention for example
\cite{mn:BK98}, \cite{mn:DD82}, \cite{mn:Rou88}, \cite{mn:Rou2000} and references therein.
Classification and characterization results were obtained having as starting point the fact that helicoidal surfaces are applicable upon
rotational surfaces.
For more recent developments and approaches on curves and surfaces, see for example \cite{mn:GAS}.

Different types of helicoidal surfaces were investigated also recently
in other ambient spaces as Minkowski space \cite{mn:LD}, or the product space $M^2\times \R$, \cite{mn:DL09}.

The study of these surfaces did not stop only in the field of classical differential geometry. Also interesting computer graphics results concerning helicoidal
surfaces were obtained in e.g. \cite{mn:HR91}.
We conclude the present paper with a new characterization of Dini's surfaces in terms of constant angle surfaces.

\section{Some results on curves in ${\mathbb{E}}^3$}

In the first part of this Section we bring to light new properties of planar and spatial curves involving the {\em angle of two curves}.
Since the result is local, let us consider two curves $\gamma$ and $\tilde\gamma$, without self-intersections, parameterized by the same
parameter $t\in I\subset\R$. For an arbitrary $t\in I$, we define the angle $\theta$ (at $t$) of $\gamma$ and $\tilde\gamma$ as the angle of the
tangent vectors $\gamma'(t)$ and $\tilde\gamma'(t)$ in the corresponding point. Interesting problems appear when the angle $\theta$ is constant.

Let us point our attention first on two planar curves which lie in the same plane. A particular case arises when one of the curves is a straight
line. If the other curve makes constant angle with this line, it is also a straight line.
A second case occurs when we are looking for planar curves $\gamma$ which make a constant angle with the unit circle ${\mathbb{S}}^1$. Take
\begin{equation}
\label{mn:gama} \gamma:I\subset\R \rightarrow\R^2,\ \gamma(s)=(\gamma_1(s),\ \gamma_2(s))
\end{equation} arc length parameterized and the unit circle
${\mathbb{S}}^1$ parameterized with the same parameter as $\gamma$,
\begin{equation}
\label{mn:circle} {\mathbb{S}} ^1(s)=(\cos\sigma(s),\ \sin\sigma(s))
\end{equation}
where $\sigma$ is a smooth function defined on $I$. Notice that, in general, the arc length parameters for the two curves are not the same.

Denote by $t_{{\mathbb{S}}^1}:=(-\sin\sigma(s),\ \cos\sigma(s))$ and $t_{\gamma}:=(\gamma_1^{'}(s),\ \gamma_2^{'}(s))$ the unit tangent vectors to ${\mathbb{S}}^1$
and to the curve $\gamma$, respectively. The angle between $\gamma$ and ${\mathbb{S}}^1$ is $\angle{(t_{{\mathbb{S}}^1},\ t_{\gamma})}$ and it will be a constant
$\theta\in[0,\pi)$.
It follows that
\begin{equation}
\label{mn:costheta} -\sin\sigma(s)\ \gamma_1^{'}(s) + \cos\sigma(s)\ \gamma_2^{'}(s)=\cos\theta.
\end{equation}
Straightforward computations yield the parametrization of the curve $\gamma$
\begin{eqnarray}
\label{mn:gammafinal1}
\gamma(s)&=& \left( \sin\theta\int\cos\sigma(s)d s -\cos\theta\int\sin\sigma(s)d s, \right.\\
& &\left. \cos\theta\int\cos\sigma(s)d s + \sin\theta\int\sin\sigma(s)d s \right) \nonumber
\end{eqnarray}
after a translation in $\R^2$.

If we denote $\displaystyle \gamma_0(s)=\left(-\int\sin\sigma(s)d s,\ \int\cos\sigma(s)d s \right)$, then the parametrization
\eqref{mn:gammafinal1} can be rewritten as
$$
\gamma(s)= \Big(\cos\theta- i \sin\theta \Big)\gamma_0(s).
$$
In order to give a geometric interpretation of the above formula, let us take a look at Figure~\ref{mn:curba_cerc} where the the angle
$\theta=\frac{\pi}{3}$, the function $\sigma(s)=s^2$ and $s \in [-\pi, \pi]$.

\begin{figure}[h]
\includegraphics[width=65mm]{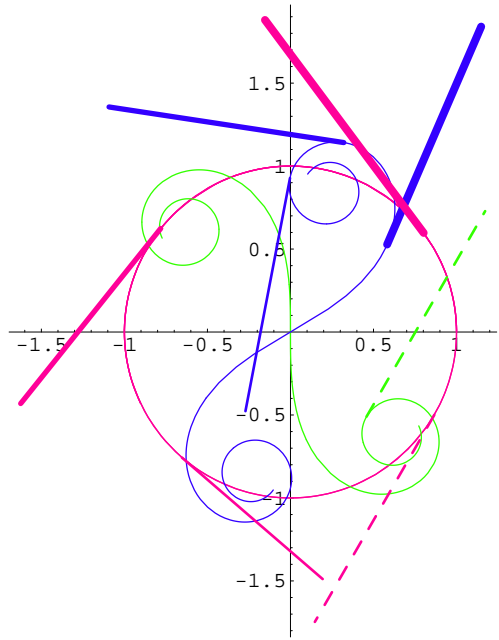}\\
\caption{Geometric interpretation.} \label{mn:curba_cerc}
\end{figure}
If the curve $\gamma_0$ (green) makes constant angle $\theta = 0$ with the circle ${\mathbb{S}}^1$ (magenta), then the curve $\gamma$ (blue) represents the
rotation with angle $\frac{\pi}{3}$ of $\gamma_0$ in clockwise direction. The line segments represent pairs of tangent vectors to the curve
$\gamma$
 and respectively to the circle ${\mathbb{S}}^1$.

In the same manner we investigate now the spatial curves. The first question that arises is to find spatial curves $\gamma$ which make a constant
angle with a straight line and a classical result tells us that $\gamma$ is a {\em helix}.
Without loss of generality the line can be taken to be (parallel with) one of the coordinate axes.
But this is an integral curve of a Killing vector vector field in ${\mathbb{R}}^3$.
Motivated by these remarks, a natural question springs up: {\em which curves make a constant angle with a Killing vector field in $\R^3$}?

Let us briefly recall that a vector field $V$ in $\R^3$ is Killing if and only if it satisfies the Killing equation:
$$
\langle {\stackrel{\circ}{\nabla}}_Y V, Z \rangle + \langle{\stackrel{\circ}{\nabla}}_Z V, Y \rangle = 0
$$
for any vector fields $Y$, $Z$ in $\R^3$, where ${\stackrel{\circ}{\nabla}}$ denotes the Euclidean connection on $\R^3$.
The set of solutions of this equation is
$$
\{\partial_x,\ \partial_y,\ \partial_z,\ -y\partial_x+x\partial_y,\ -z\partial_y+ y\partial_z,\ z\partial_x-x\partial_z\}
$$
and it gives a basis of Killing vector fields in $\R^3$. Here $x,y,z$ denote the global coordinates on $\R^3$ and $\R^3= {\rm span}\{\partial_x,
\partial_y, \partial_z\}$ regarded as a vector space.

As we have mentioned before, spatial curves which make a constant angle with any of the Killing vector fields $\partial_x,\
\partial_y,\ {\rm or}\ \partial_z$ are helices. Regarding the other three Killing vector fields, the problem is basically the same for each of them and
let us choose for example $V=-y\partial_x+x\partial_y$.

We study first planar curves $\gamma$ (in $xy-$plane) defined by \eqref{mn:gama} which make a constant angle $\theta$ with the
Killing vector field $V$.

Since the curve $\gamma$ is parameterized by arc length, the unit tangent vector is given by $t_{\gamma}=(\gamma_1',\ \gamma_2')$. Since
$V_{|_\gamma} =(-\gamma_2,\ \gamma_1)$, the condition $\angle{(t_\gamma,\ V|_{\gamma})}=\theta$ can be rewritten as
\begin{equation}
\label{mn:condV}
-\gamma_1'(s)\gamma_2(s)+\gamma_2'(s)\gamma_1(s)=\sqrt{\gamma_1(s)^2+\gamma_2(s)^2}~\cos\theta.
\end{equation}
A convenient way of handling $\gamma$ is to consider its polar parametrization,
$$
\gamma(s)=(r(s)\cos\phi(s),\ r(s)\sin\phi(s))
$$
and then equation \eqref{mn:condV} becomes
\begin{equation}
r(s)\phi'(s)=\cos\theta.
\end{equation}
Using now the fact that $\gamma$ is parameterized by arc length and combining it with the previous condition, we get after integration

\qquad if $\theta=0$: $r=r_0$ and $\phi=\frac s{r_0}+\phi_0$

\qquad if $\theta\neq0$: $r(s)= s \sin\theta+s_0$ and $\phi(s)=\cot\theta\ln(s \sin\theta+s_0)+\phi_0$

where $r_0>0$ and $s_0,\phi_0 \in \R$.

We state now the following theorem:
\begin{theorem}
A curve lying in the $xy$-plane and making constant angle with the Killing vector $V=-y\frac\partial{\partial x}+x\frac\partial{\partial y}$ is
\begin{itemize}
\item[a)] either the circle ${\mathcal{C}}(r_0)$ centered in the origin and of radius $r_0$
\item[b)] or a straight line passing through the origin
\item[c)] or the logarithmic spiral $r(\phi)=e^{\tan\theta~(\phi-\phi_0)}$.
\end{itemize}
\end{theorem}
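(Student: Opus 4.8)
The plan is to start from the setup already prepared in the text: a planar curve $\gamma$ in the $xy$-plane parameterized by arc length, written in polar form $\gamma(s)=(r(s)\cos\phi(s),r(s)\sin\phi(s))$, making constant angle $\theta$ with the Killing field $V=-y\partial_x+x\partial_y$. The constant angle condition \eqref{mn:condV}, after passing to polar coordinates, has already been reduced to the clean relation $r(s)\phi'(s)=\cos\theta$. The remaining input is that $s$ is the arc length parameter, which gives $\gamma_1'^2+\gamma_2'^2=1$; computing this in polar coordinates yields $r'(s)^2+r(s)^2\phi'(s)^2=1$. First I would substitute $r\phi'=\cos\theta$ into this to obtain $r'(s)^2=1-\cos^2\theta=\sin^2\theta$, hence $r'(s)=\pm\sin\theta$ (choosing a sign by orientation).

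Next I would split into the two cases $\theta=0$ and $\theta\neq0$, exactly as foreshadowed. If $\theta=0$, then $r'(s)=0$, so $r\equiv r_0$ is constant, and $\phi'(s)=\cos\theta/r_0=1/r_0$, giving $\phi(s)=s/r_0+\phi_0$; this is the circle $\mathcal{C}(r_0)$ of radius $r_0$ centered at the origin, which is case (a). If $\theta\neq0$, integrating $r'(s)=\sin\theta$ gives $r(s)=s\sin\theta+s_0$, and then $\phi'(s)=\cos\theta/(s\sin\theta+s_0)$ integrates to $\phi(s)=\cot\theta\,\ln(s\sin\theta+s_0)+\phi_0$. To exhibit the trace of this curve I would eliminate the parameter $s$: from the expression for $\phi$ one gets $\phi-\phi_0=\cot\theta\,\ln r$, i.e. $\ln r=\tan\theta\,(\phi-\phi_0)$, which is the logarithmic spiral $r(\phi)=e^{\tan\theta(\phi-\phi_0)}$ of case (c). (Here I should note that if $\cos\theta=0$, i.e. $\theta=\pi/2$, then $\phi'\equiv0$, so $\phi$ is constant and the curve is a ray from the origin — this is the degenerate limiting case of the spiral and accounts for case (b), a straight line through the origin.)

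Finally I would collect these subcases and match them to the statement: $\theta=0$ gives the circle (a); $\theta=\pi/2$ (where $\cot\theta$ and $\tan\theta$ degenerate and $\phi$ is constant) gives the line through the origin (b); and all other $\theta$ give the logarithmic spiral (c). I expect no serious obstacle here — the derivation is essentially a short ODE computation already set up in the text. The only points needing care are the correct handling of the sign of $r'$ (fixed by orientation / direction of parameterization, and harmless since it only reflects the curve), the degenerate case $\theta=\pi/2$ which produces the straight line and must be separated out rather than being subsumed in the spiral formula, and the bookkeeping in eliminating $s$ to pass from the parametric description to the polar equation $r(\phi)$. A brief remark that the result is local, so we need only describe the connected traces, rounds out the argument.
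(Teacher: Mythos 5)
Your proposal is correct and follows essentially the same route as the paper: the paper also reduces the constant-angle condition to $r(s)\phi'(s)=\cos\theta$ in polar coordinates, combines it with the arc-length relation $r'^2+r^2\phi'^2=1$ to get $r'=\pm\sin\theta$, and integrates, obtaining the circle for $\theta=0$ and $r(s)=s\sin\theta+s_0$, $\phi(s)=\cot\theta\ln(s\sin\theta+s_0)+\phi_0$ otherwise. Your explicit separation of the degenerate case $\theta=\pi/2$ (yielding the line through the origin) and the elimination of $s$ to reach $r(\phi)=e^{\tan\theta(\phi-\phi_0)}$ merely fill in steps the paper leaves implicit.
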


\begin{remark} \rm This is not a surprising fact since the circle ${\mathcal{C}}(r_0)$ is an integral curve for $V$ and
the logarithmic spiral, known also as
\emph{the equiangular spiral}, is characterized by the property that the angle between its tangent and the radial direction at every point is
constant. Moreover, the position vector $p$ and the vector $V_p$ are orthogonal.
\end{remark}

\medskip

If $\gamma$ is a spatial curve making a constant angle with the Killing vector field $V$, let us consider $\gamma:I\rightarrow \R^3,\
\gamma(s)=(\gamma_1(s),\gamma_2(s),\gamma_3(s))$ given in cylindrical coordinates as:
\begin{equation}
\label{mn:gamacil} \gamma(s)=\Big(r(s)\cos\phi(s),\ r(s)\sin\phi(s),\ z(s)\Big)
\end{equation}
 where $s$ is again arc length parameter.

We have $t_\gamma=(\gamma_1',\ \gamma_2',\ \gamma_3')$ and $V|_\gamma=(-\gamma_2, \gamma_1,0)$. Similarly as in previous situation, the property
$\angle{\big(t_\gamma,\ V|_{\gamma}\big)}=\theta$ becomes
\begin{equation}
\label{mn:condVcil}
r(s)\phi'(s)=\cos\theta.
\end{equation}
Again, since $\gamma$ is parameterized by arc length, we get $r'(s)^2+z'(s)^2=\sin^2\theta$. Hence, there exists a function $\omega(s)$ such that
$$
r'(s)=\sin\theta\cos\omega(s),\ \ z'(s)=\sin\theta\sin\omega(s).
$$
Integrating once and combining with \eqref{mn:condVcil} we may formulate the following theorem:
\begin{theorem}
A curve $\gamma$ in the Euclidean space ${\mathbb{E}}^3$ which makes a constant angle with the Killing vector
$V=-y\frac\partial{\partial x}+x\frac\partial{\partial y}$
is given, in cylindrical coordinates, up to vertical translations and rotations around $z$-axis, by
$$
r(s)=r_0+\sin\theta\int\limits^s \cos\omega(\zeta) d\zeta,\ \ \ \ z(s)=\sin\theta\int\limits^s \sin\omega(\zeta) d\zeta,\ \ \ \
\phi(s)=\cos\theta\int\limits^s\frac{d\zeta}{r(\zeta)}
$$
where $r_0\in{\mathbb{R}}$ and $\omega$ is a smooth function on $I$.
\end{theorem}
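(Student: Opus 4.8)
\emph{Proof strategy.} The plan is to convert the constant angle hypothesis into a pair of first order differential relations for the cylindrical coordinates $(r,\phi,z)$ of $\gamma$ and then to integrate them directly; the integration constants $z_0$ and $\phi_0$ that appear will account for the freedom of vertical translation and of rotation about the $z$-axis present in the statement.

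First I would rewrite all the data in the cylindrical frame. With $\gamma_1=r\cos\phi$, $\gamma_2=r\sin\phi$, $\gamma_3=z$ as in \eqref{mn:gamacil}, differentiation gives $\gamma_1'=r'\cos\phi-r\phi'\sin\phi$ and $\gamma_2'=r'\sin\phi+r\phi'\cos\phi$, and a short computation yields
$$
-\gamma_1'\gamma_2+\gamma_2'\gamma_1=r^2\phi',\qquad \gamma_1^2+\gamma_2^2=r^2 .
$$
Since $t_\gamma$ is a unit vector and $V|_\gamma=(-\gamma_2,\gamma_1,0)$ has length $r$, the condition $\angle\big(t_\gamma,V|_\gamma\big)=\theta$ becomes $\cos\theta=(r^2\phi')/r=r\phi'$, which is \eqref{mn:condVcil}. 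Expanding the unit speed condition $|\gamma'|^2=1$ in the same coordinates gives $r'(s)^2+r(s)^2\phi'(s)^2+z'(s)^2=1$, and substituting $r(s)^2\phi'(s)^2=\cos^2\theta$ from \eqref{mn:condVcil} leaves $r'(s)^2+z'(s)^2=\sin^2\theta$.

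Assume first that $\theta\neq 0$. Then $s\mapsto\big(r'(s)/\sin\theta,\,z'(s)/\sin\theta\big)$ is a smooth map from $I$ into the unit circle, hence it lifts to a smooth function $\omega:I\to\R$ with $r'(s)=\sin\theta\cos\omega(s)$ and $z'(s)=\sin\theta\sin\omega(s)$. Integrating,
$$
r(s)=r_0+\sin\theta\int\limits^{s}\cos\omega(\zeta)\,d\zeta,\qquad z(s)=z_0+\sin\theta\int\limits^{s}\sin\omega(\zeta)\,d\zeta ,
$$
and a translation along the $z$-axis removes $z_0$. On every subinterval where the parametrization \eqref{mn:gamacil} is legitimate we have $r>0$, so \eqref{mn:condVcil} gives $\phi'(s)=\cos\theta/r(s)$ and therefore $\phi(s)=\phi_0+\cos\theta\int\limits^{s}\frac{d\zeta}{r(\zeta)}$, the constant $\phi_0$ being absorbed by a rotation about the $z$-axis. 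This is exactly the asserted parametrization. The excluded value $\theta=0$ forces $r'\equiv z'\equiv 0$, so $\gamma$ is a horizontal circle, an integral curve of $V$, which is the formula above with $r\equiv r_0$ and $\omega$ irrelevant. Conversely, running the same computation backwards shows that for any smooth $\omega$ and any $r_0$ with $r>0$ the resulting curve is unit speed and satisfies $r\phi'=\cos\theta$, hence makes constant angle $\theta$ with $V$, so the description is exhaustive.

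The computations here are entirely routine; the only points that deserve care are the existence of a \emph{smooth} angle function $\omega$ lifting the unit vector $(r',z')/\sin\theta$ (a standard lifting argument on the interval $I$) and the bookkeeping of the domain of definition together with the degenerate values $\theta=0$ (horizontal circles, treated above) and $\theta=\pi/2$ (where $\phi$ is constant and $\gamma$ is an arbitrary unit speed curve in a half-plane bounded by the $z$-axis, still covered by the formula since then $\cos\theta=0$).
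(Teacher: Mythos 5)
Your proposal is correct and follows essentially the same route as the paper: pass to cylindrical coordinates, derive $r\phi'=\cos\theta$ from the angle condition and $r'^2+z'^2=\sin^2\theta$ from unit speed, introduce the lift $\omega$, and integrate. The extra care you take with the smooth lifting of $(r',z')/\sin\theta$, the integration constants versus the stated symmetries, and the degenerate angles is a welcome refinement of the paper's sketch, but not a different argument.
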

Let us mention some particular cases for function $\omega$. (We will take $\theta$ different from $0$ and $\frac\pi2$.)
\begin{itemize}
\item
The function $\omega(s)=\omega_0$ is constant.
\begin{itemize}
\item If $\omega_0=0$, then $\gamma$ is a plane curve given by the logarithmic spiral $r(\phi)= \sin\theta e^{\tan \theta \phi}$.
\item If $\omega_0\neq0$, then  $\gamma$ lies on the cone $x^2+y^2-{\rm cotan }^2\omega_0 z^2=0$. Moreover,
the projection of $\gamma$ on the $xy-$plane is the logarithmic spiral $r(\phi)= \sin\theta\cos\omega_0 e^{\frac{\cos\omega_0}{{\rm cotan
\theta}}\phi}$. (See Figure~\ref{fspira1}.)
\end{itemize}
\item
If $\omega$ is an affine function, $\omega(s)= m s + n,\ m,n\in\mathbb{R}$, then parametrization \eqref{mn:gamacil} represents a curve on the
$2-$sphere centered in origin with radius $\frac{\sin\theta}{|m|}$. The projection of $\gamma$ on the $xy-$plane is given in polar coordinates by
$r(\phi)=\frac{\sin\theta}{m \cosh(\phi \tan\theta)}$~.(See Figure~\ref{fsfera1}.)
\item If $\omega(s)=\arccos(s)$, then the projection of $\gamma$ on the $xy-$plane is
$r(\phi)=\frac{2\sin\theta{\rm cotan}^2\theta}{\phi^2}$. (See Figure~\ref{farccos}.)
\end{itemize}
\begin{figure}[ht]
\begin{minipage}[b]{0.4\linewidth}
\includegraphics[scale=0.5]{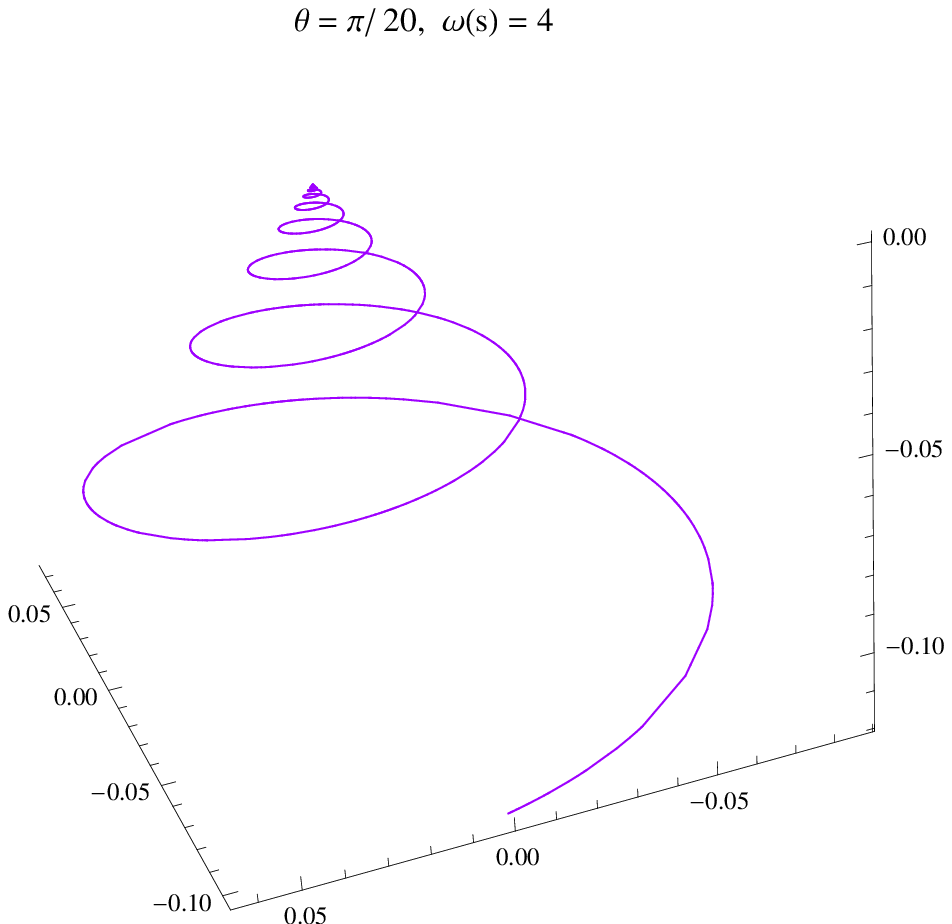}
\label{fig:figure1}
\end{minipage}
\begin{minipage}[b]{0.5\linewidth}
\centering
\includegraphics[scale=0.5]{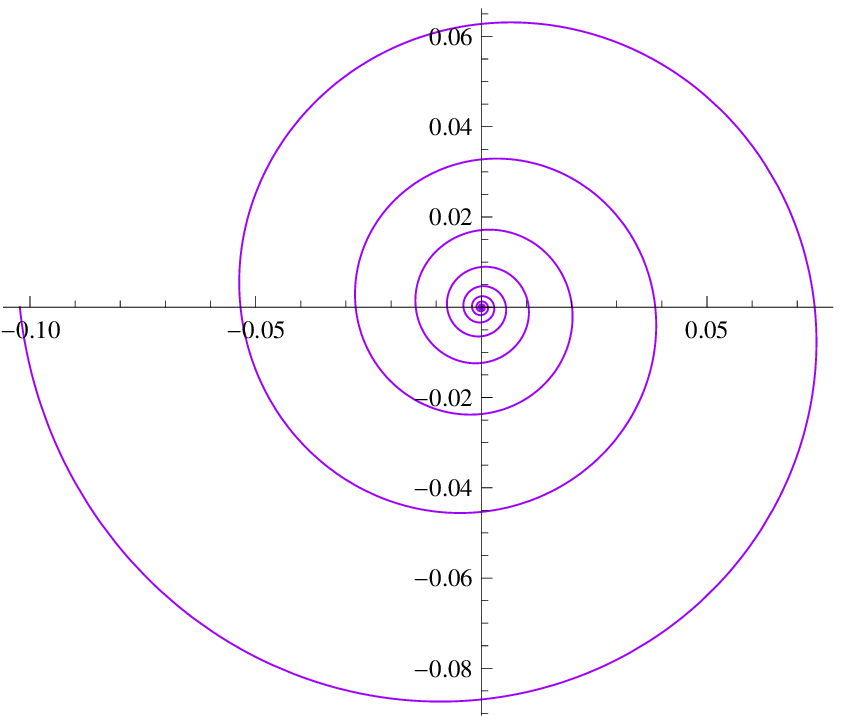}
\label{fig:figure2}
\end{minipage}
\caption{}
\label{fspira1}
\end{figure}

\begin{figure}
  \includegraphics[height=55mm]{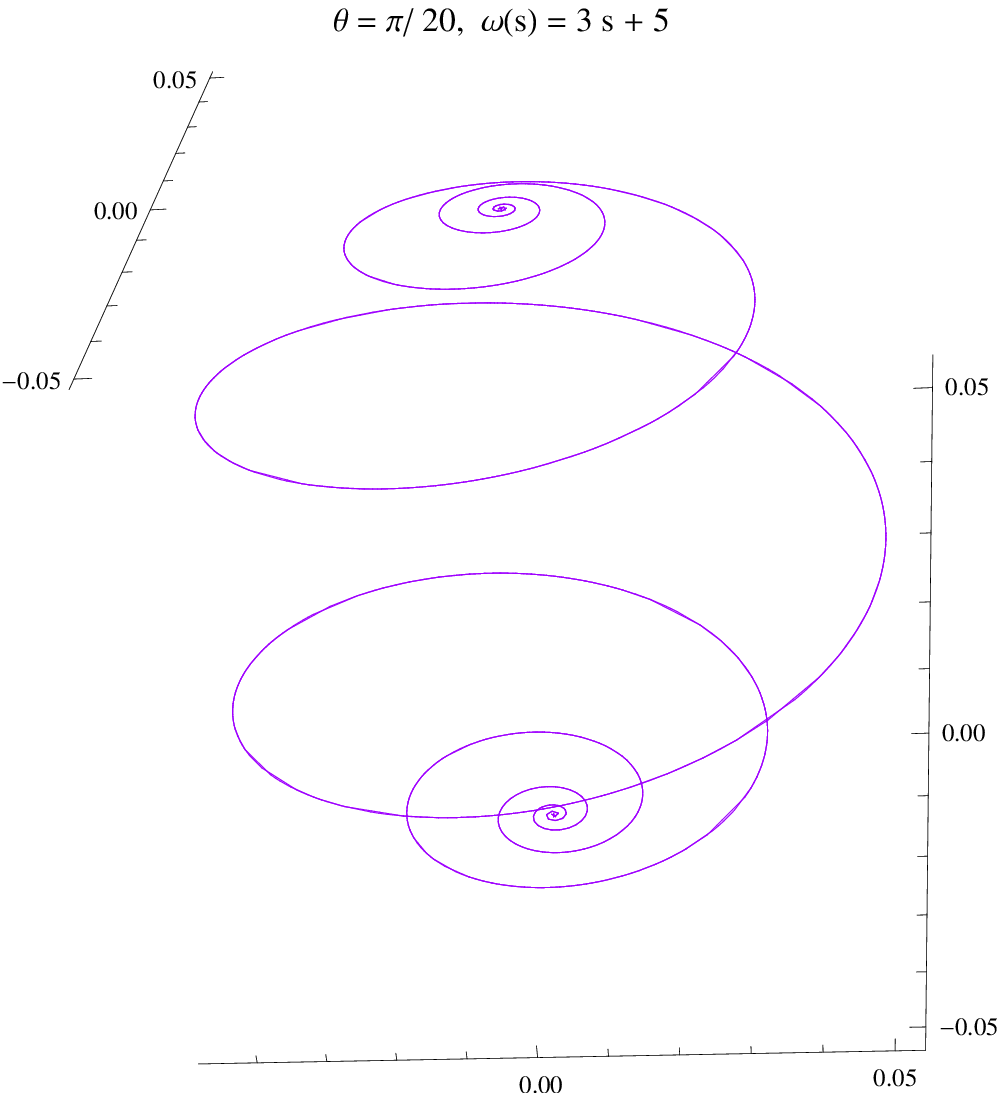} \hspace{5mm}
    \includegraphics[height=55mm]{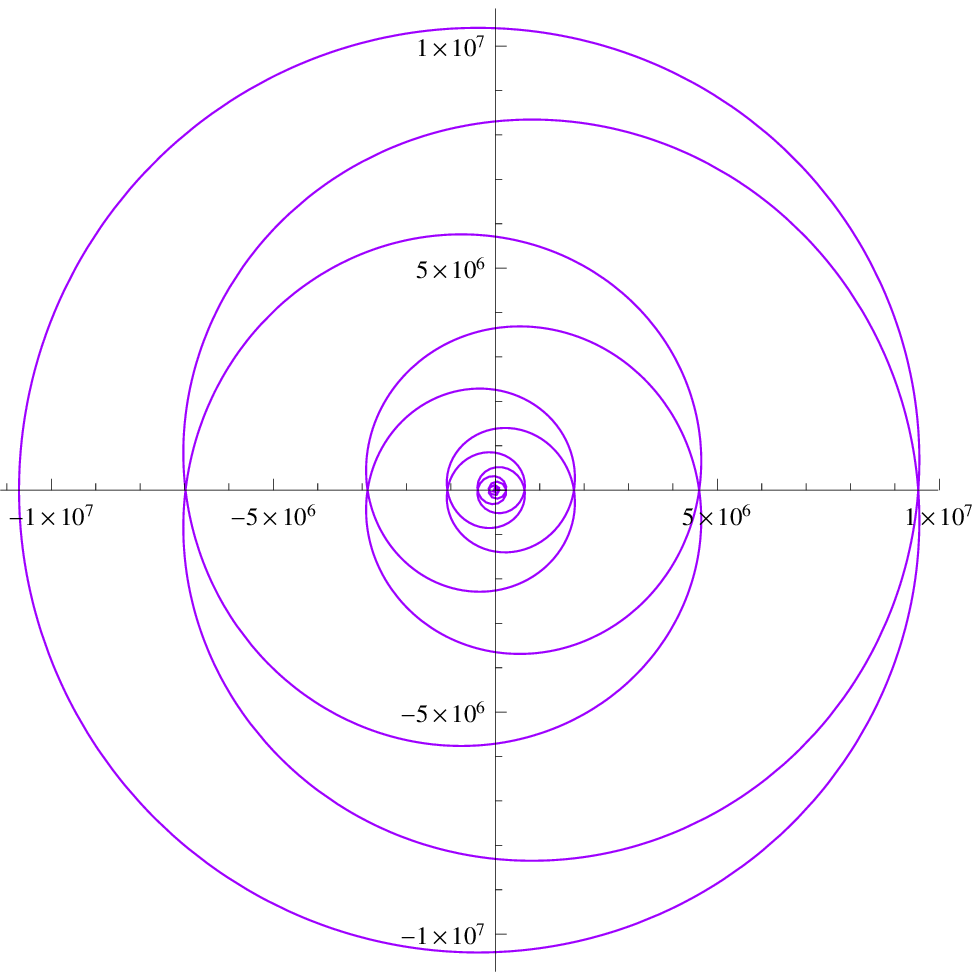}
  \caption{}
  \label{fsfera1}
\end{figure}

\begin{figure}
  \includegraphics[height=52mm]{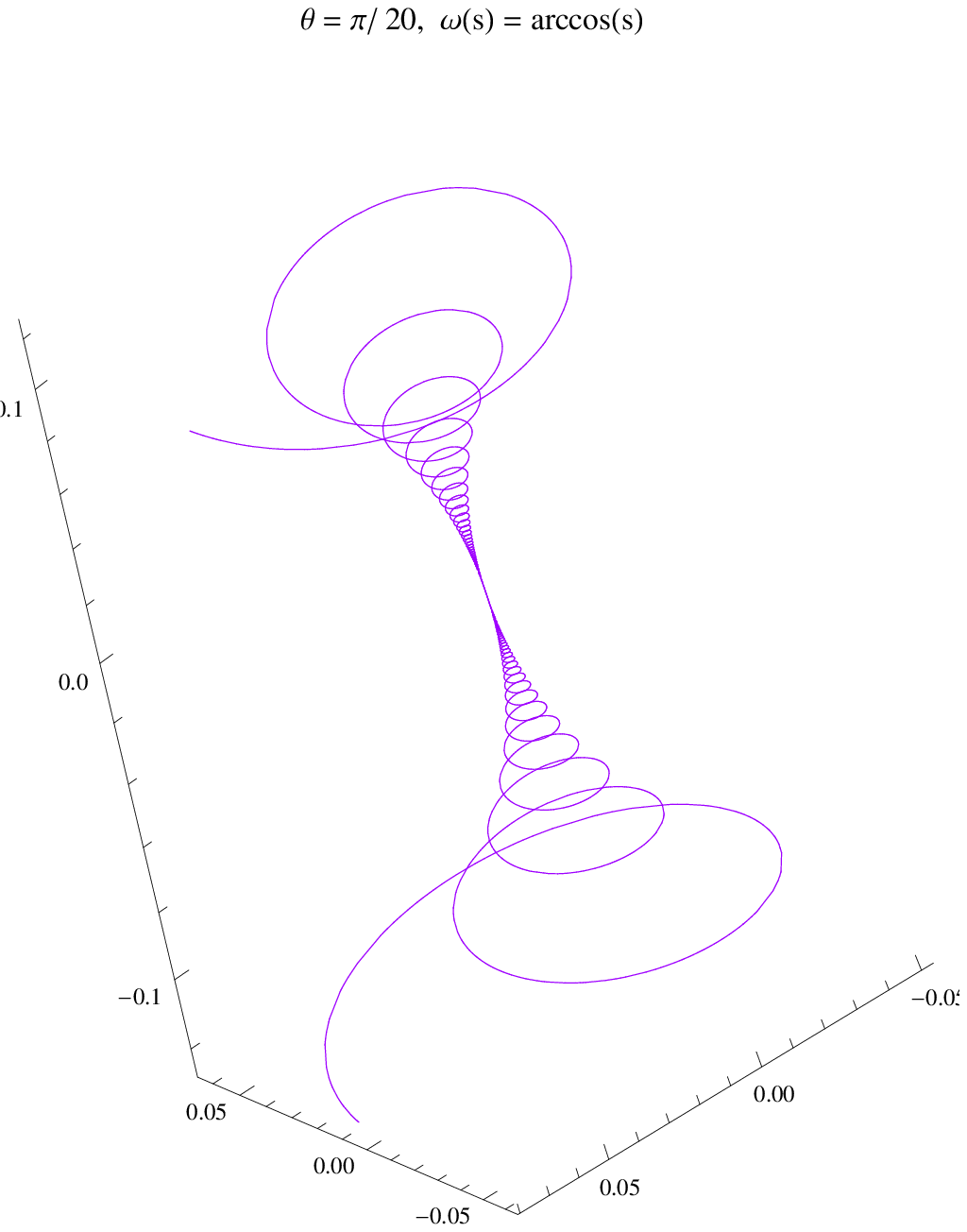} \hspace{5mm}
    \includegraphics[height=52mm]{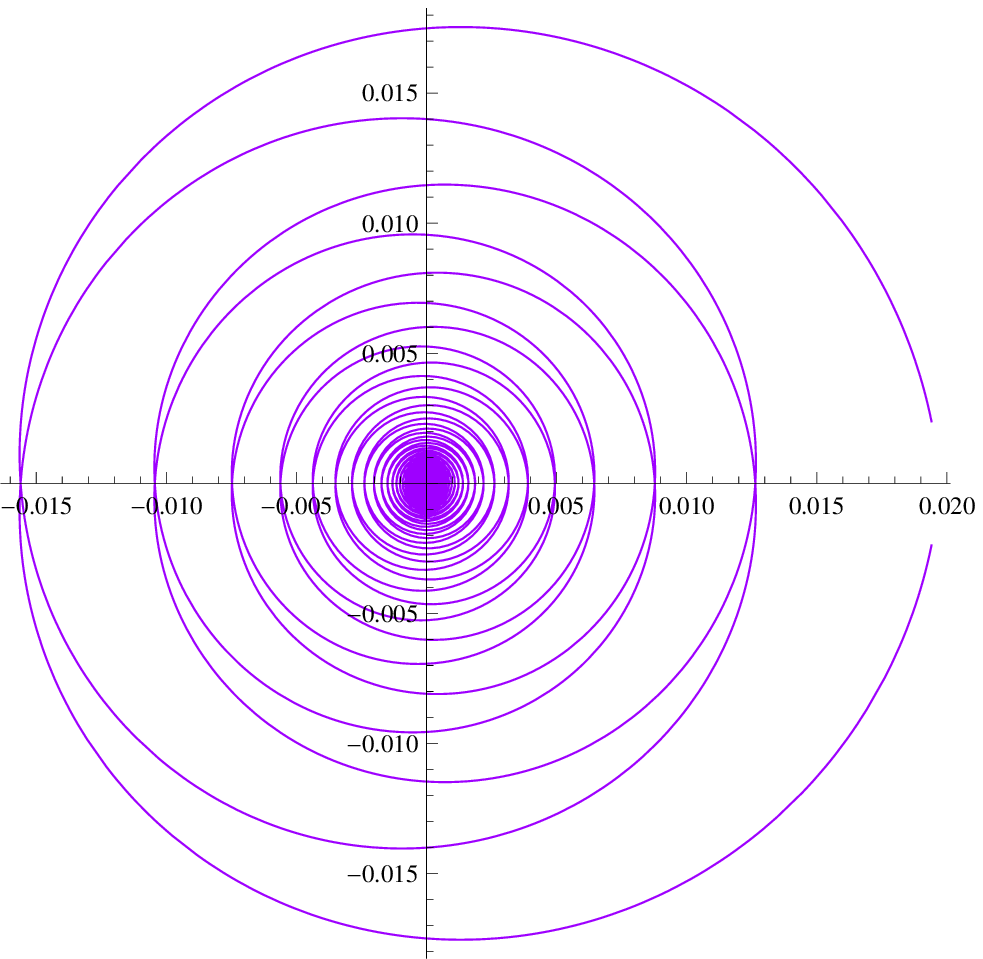}
  \caption{}
  \label{farccos}
\end{figure}

\newpage

\section{Classification of surfaces making constant angle with $V$}

In this section we are interested to find all {\em surfaces in Euclidean $3$-space which make a constant angle with the Killing vector field
$V$}.

Since the vector field $V= -y \partial_x + x \partial_y$ must be non-null, the surfaces lie in $\E^3\setminus Oz$.

A good motivation to study this problem is given by the particular case $\theta=\frac{\pi}{2}$. Let the surface $M$ be parameterized by
$F:D\subset\R^2 \rightarrow \E^3 \setminus \{Oz\}$ given by a graph
\begin{equation}
\label{mn:graph}
F(x,y)=(x,y, f(x,y)).
\end{equation}
Computing the normal to the surface and imposing for the vector field $V$ to be tangent to the surface in this case, namely to be orthogonal to
the unit normal $N$, one gets $y f_x-x f_y=0$. Hence, the surfaces for which $V$ makes a constant angle $\frac{\pi}{2}$ are given by
\eqref{mn:graph} with $f(x,y)=\mathbf{f}(\sqrt{x^2+y^2})$, where $\mathbf{f}$ is an arbitrary real valued function; they are rotational
surfaces.

Concerning the other particular case when the constant angle $\theta$ vanishes identically, we obtain halfplanes having $z-$axis as boundary.

We focus our attention now on the general case $\theta\neq 0,\frac{\pi}{2}$. First, let us fix the notations. Denote by $g$ the metric on $M$ and
by $\nabla$ the associated Levi Civita connection and let $\stackrel{\circ}{\nabla}$ be the flat connection in the ambient space. The Gauss and
Weingarten formulas are:

{\bf(G)}\qquad\qquad $\stackrel{\circ}{\nabla}_X Y=\nabla_X Y + h(X,Y)$

{\bf(W)}\qquad\qquad $\stackrel{\circ}{\nabla}_X N=-A X$

for every $X,Y$ tangent to $M$. Here $h$ is a symmetric $(1, 2)-$tensor field called the second fundamental form of the surface and $A$ is a
symmetric $(1, 1)-$tensor field called the shape operator associated to $N$ satisfying $\<h(X,Y),\ N\>=g(X,AY)$ for any $X,Y$ tangent to $M$.

Projecting $V$ onto the tangent plane to $M$ one has $V=T+\mu\cos\theta N$, where $T$ is the tangent part with $\|T\|=\mu \sin\theta$ and
$\mu=\|V\|$. At this point we may choose an orthonormal basis $\{e_1,e_2\}$ on the tangent plane to $M$ such that $e_1=\frac{T}{\|T\|}$ and
$e_2\perp e_1$. Now, we have that $V=\mu(\sin\theta e_1 +\cos\theta N)$.

For an arbitrary vector field $X$ in $\mathbb{E}^3$, we have
\begin{equation}
\label{mn:kX} \stackrel{\circ}{\nabla}_X V = k\times X
\end{equation}
where $k=(0,0,1)$ and $\times$ stands for the usual cross product in $\E^3$.

If $X$ is tangent to $M$, then
\begin{eqnarray}
\label{mn:XV}
\stackrel{\circ}{\nabla}_X V &=& X(\mu)\Big(\sin\theta e_1 +\cos\theta N\Big) \\
& & +~\mu\sin\theta \Big(\nabla_X e_1+ h(X,e_1)N \Big) - \mu\cos\theta AX. \nonumber
\end{eqnarray}
Since $e_1$ and $e_2$ are tangent to $M$, let us decompose $k\times e_1$ and $k\times e_2$ in the basis $\{e_1, e_2, N\}$ denoting the following
angles: $\angle(N,k):= \varphi$, $\angle(e_1,k):= \eta$ and $\angle(e_2,k):= \psi$. By standard calculus one gets up to sign
$$
\cos\varphi = -\sin\theta\sin\psi \ {\rm and}\ \cos\eta= \cos\theta\sin\psi.
$$
In these notations, we have
\begin{equation}
\label{mn:ke1e2} k\times e_1 = -\sin\theta \sin\psi~ e_2 - \cos\psi N,\ \  k\times e_2 = \sin\theta \sin\psi~ e_1 + \cos\theta\sin\psi N.
\end{equation}

Combining \eqref{mn:kX} and \eqref{mn:XV}, first for $X=e_1$ and then for $X=e_2$, together with \eqref{mn:ke1e2} we obtain
\begin{equation}
\label{mn:e1mu} e_1(\mu)=-\cos\theta\cos\psi
\end{equation}
\begin{equation}
\label{mn:e2mu} e_2(\mu)=\sin\psi.
\end{equation}
As a consequence, the shape operator associated to the second fundamental form is given by
\begin{equation}
\label{mn:h} A=\left(
  \begin{array}{cc}
    -\frac{\sin\theta\cos\psi}{\mu} & 0 \\
    0 & \lambda \\
  \end{array}
\right)
\end{equation}
where $\lambda$ is a smooth function on $M$. Hence $e_1$ and $e_2$ are principal directions on $M$.

In order to study the geometry of the surface $M$, we determine first the Levi Civita connection of $g$:
$$
\nabla_{e_1}e_1 =-\frac{\sin\psi}{\mu}~ e_2,\ \ \nabla_{e_1}e_2 =\frac{\sin\psi}{\mu}~ e_1,
$$
$$
 \nabla_{e_2}e_1 =\lambda~{\rm cotan}\theta~ e_2,\ \  \nabla_{e_2}e_2 =- \lambda~{\rm cotan}\theta~ e_1.
$$
Thus, the Lie bracket of $e_1$ and $e_2$ is given by
\begin{equation}
\label{mn:Lie_e1e2} [e_1, e_2]= \frac{\sin\psi}{\mu} e_1-\lambda~{\rm cotan}\theta~ e_2
\end{equation}
and consequently a compatibility condition is found computing $[e_1,e_2](\mu)$ in two ways and taking into account \eqref{mn:e1mu} and
\eqref{mn:e2mu}:
\begin{equation}
\label{mn:compatibility} -\cos\psi ~e_1(\psi)+\cos\theta \sin\psi~ e_2(\psi) = \frac{\cos\theta\sin\psi\cos\psi}{\mu}+\lambda~ {\rm
cotan}\theta\sin\psi.
\end{equation}
From now on we use cylindrical coordinates, such that the parametrization of the surface $M$ may be thought as
\begin{equation}
\label{mn:F} F:D\subset \R^2 \longrightarrow \E^3 \setminus\{Oz\},\ \ (u,v)\mapsto \Big(r(u,v), \phi(u,v),z(u,v)\Big).
\end{equation}
The Euclidean metric in $\E^3$ becomes a warped metric $\langle ~,~ \rangle = dr^2+dz^2+r^2d\phi^2$ and its Levi Civita connection is given by:
\begin{eqnarray}\label{mn:nabla_cil}
  \stackrel{\circ}{\nabla}_{\partial_r} \partial_r =0,\
  & \stackrel{\circ}{\nabla}_{\partial_r} \partial_\phi=\stackrel{\circ}{\nabla}_{\partial_\phi} \partial_r= \frac1 r~\partial_\phi,\
   & \stackrel{\circ}{\nabla}_{\partial_\phi} \partial_\phi= -r~ \partial_r, \\
  \stackrel{\circ}{\nabla}_{\partial_z} \partial_z=0,\  &
  \stackrel{\circ}{\nabla}_{\partial_z} \partial_\phi=\stackrel{\circ}{\nabla}_{\partial_\phi} \partial_z=0, \
  & \stackrel{\circ}{\nabla}_{\partial_r} \partial_z=\stackrel{\circ}{\nabla}_{\partial_z} \partial_r=0. \nonumber
\end{eqnarray}
The Killing vector field $V$ coincides with $\partial_\phi$.

The basis $\{e_1,e_2,N\}$ may be expressed in terms of the new coordinates as:
\begin{eqnarray}\label{mn:e1e2N}
  e_1 & = & -\cos\theta\cos\psi~\partial_r + \frac{\sin\theta}{\mu}~\partial_\phi +\cos\theta\sin\psi~\partial_z \nonumber \\
  e_2 & = & \sin\psi~\partial_r +\cos\psi~\partial_z \\
  N & = & \sin\theta\cos\psi ~\partial_r +\frac{\cos\theta}{\mu}~\partial_\phi - \sin \theta\sin\psi~\partial_z. \nonumber
\end{eqnarray}
In the sequel, since $\{e_1,e_2\}$ is an involutive system, in other words $[e_1,e_2]\in {\rm span}\{e_1,e_2\}$, one obtains:
\begin{equation}
\label{mn:crosete1e2} \frac{\cos\theta\sin\psi}{\mu}+e_1(\psi)=0.
\end{equation}
\begin{remark}\label{mn:rmk2} \rm
When $\psi$ is constant, it follows that $\sin\psi=0$ since $\theta\neq \frac{\pi}{2}$. Hence, from now on we will deal with $\psi\neq0$, while
the case $\psi=0$ will be treated separately.
\end{remark}
Writing \eqref{mn:crosete1e2} in an equivalent manner as $\frac{e_1(\psi)}{\sin\psi}=-\frac{\cos\theta}{r}$ and taking the derivative along $e_1$
we get
\begin{equation}
\label{mn:e1e1}
e_1(e_1(\psi))=0.
\end{equation}
Our intention is to find, locally, appropriate coordinates on the surface in order to write explicit embedding equations of $M$ in $\E^3$. Let us
choose a local coordinate $u$ such that $e_1=\frac{\partial}{\partial u}$. The second coordinate, denote it by $v$, will be defined later. In
this framework, equation \eqref{mn:e1e1} has the solution $\psi(u,v)=c(v) u + \tilde{c}$, where $\tilde{c}\in \R$ and $c\in C^\infty(M),\ c\neq
0\ \forall v$. Choosing $v$ such that $\frac{\partial\psi}{\partial v} = 0$ we may consider $c={\rm constant}$. Moreover, after a translation in
$u$ coordinate, one gets
\begin{equation}
\label{mn:psi} \psi = c u,\ c\in \R\setminus\{0\}.
\end{equation}
Substituting \eqref{mn:psi} in \eqref{mn:crosete1e2} easily follows
\begin{equation}
\label{mn:mu} \mu =-\frac{\cos\theta \sin(cu)}{c}\ .
\end{equation}
Now let us express $e_2$ in terms of $\partial_u$ and $\partial_v$ as
\begin{equation}
\label{mn:e2ab} e_2=a(u,v)\partial_u+b(u,v)\partial_v.
\end{equation}
Thus, $e_2(\mu)=-a(u,v)\cos\theta\cos(cu)$. On the other hand, replacing the expression of $\psi$ given by \eqref{mn:psi} in \eqref{mn:e2mu} we
get that $e_2(\mu)=\sin(cu)$. Hence,
\begin{equation}
\label{mn:a} a(u,v)=a(u)=-\frac{\tan(cu)}{\cos\theta}.
\end{equation}
Exploiting the compatibility condition \eqref{mn:compatibility} and combining it with \eqref{mn:a} we get
\begin{equation}
\label{mn:lambda} \lambda = -c\tan\theta\tan(cu).
\end{equation}
Consequently, $e_2=-\frac{\tan(cu)}{\cos\theta}\partial_u+b(u,v)\partial_v$. In terms of $u$ and $v$, from \eqref{mn:Lie_e1e2}, one obtains
$b(u,v)=\frac{b_0(v)}{\cos(cu)}$. After a homothetic transformation of the $v-$coordinate, one can take $b_0(v)=1$ and thus
\begin{equation}
\label{mn:b} b(u,v)=\frac{1}{\cos(cu)}.
\end{equation}
Replacing now \eqref{mn:a} and \eqref{mn:b} in \eqref{mn:e2ab} the expression of $e_2$ is explicitly found
\begin{equation}
\label{mn:e2} e_2=-\frac{\tan(cu)}{\cos\theta}\partial_u + \frac{1}{\cos(cu)}\partial_v.
\end{equation}
The Levi Civita connection may also be written in terms of the coordinates $u$ and $v$ in the following manner:
\begin{subequations}
\renewcommand{\theequation}{\theparentequation .\alph{equation}}
\label{mn:nablauv}
\begin{eqnarray}
\label{mn:nabla_uu}
\ \ \ \ \nabla_{\du} \du &=& \frac{c}{\cos\theta}\Big(-\frac{\tan(cu)}{\cos\theta}\du + \frac{1}{\cos(cu)}\dv\Big)\\[2mm]
\label{mn:nabla_uv}
\nabla_{\du} \dv &=&\nabla_{\dv} \du = c\tan^2\theta\sin(cu)\Big(-\frac{\tan(cu)}{\cos\theta}\du + \frac{1}{\cos(cu)}\dv \Big)  \\[2mm]
 \label{mn:nabla_vv}
\nabla_{\dv} \dv&=& -~c\tan^2\theta\sin(cu)\cos(cu)\du +\\
 & &\nonumber + ~\frac{c\tan^2\theta}{\cos\theta}\sin^2(cu)\Big(-\frac{\tan(cu)}{\cos\theta}\du + \frac{1}{\cos(cu)}\dv\Big).
\end{eqnarray}
\end{subequations}
At this point, using the expression of the shape operator \eqref{mn:h}, and the formulas \eqref{mn:psi}, \eqref{mn:mu} and respectively
\eqref{mn:lambda} we obtain the expression of the second fundamental form in terms of $u$ and $v$ as:
\begin{subequations}
\renewcommand{\theequation}{\theparentequation .\alph{equation}}
\begin{eqnarray}
\ \ \ \ h(\du,~ \du) &=& c~\tan\theta \ {\rm cotan} (cu) \\[2mm]
h(\du,~ \dv) &=&h(\dv,~ \du) = \frac{c~\tan\theta\cos(cu)}{\cos\theta} \\[2mm]
h(\dv,~ \dv) &=& c\tan^3\theta\sin(cu)\cos(cu).
\end{eqnarray}
\end{subequations}
Since the surface is given in cylindrical coordinates by the isometric immersion \eqref{mn:F}, and using the Euclidean connection
\eqref{mn:nabla_cil} one has
\begin{equation}
\label{mn:uu_1} \stackrel{\circ}{\nabla}_{\du} \du =(r_{uu}-r\phi_u^2,\ \phi_{uu}+2\frac{r_u}{r}\phi_u,\ z_{uu}).
\end{equation}
On the other hand, using Gauss formula {\bf (G)} one computes
\begin{eqnarray}
\label{mn:uu_2}
\stackrel{\circ}{\nabla}_{\du} \du &=&\Big(\frac{c~\sin^2(cu)+c~\sin^2\theta\cos^2(cu)}{\cos\theta\sin(cu)},\\
& &  \ \ \ -\frac{c^2\tan\theta\cos(cu)}{\sin^2(cu)},\ c~\cos\theta\cos(cu)\Big).\nonumber
\end{eqnarray}
Comparing the two previous expressions \eqref{mn:uu_1} and \eqref{mn:uu_2} we have the following PDEs:
\begin{subequations}
\renewcommand{\theequation}{\theparentequation .\alph{equation}}
\label{mn:Fuu}
\begin{eqnarray}
\label{mn:Fuu1}
\ \ \ \ r_{uu}-r\phi_u^2 &=& \frac{ c~\sin(cu)}{\cos\theta} + \frac{c~\sin^2\theta\cos^2(cu)}{\cos\theta\sin(cu)} \\[2mm]
\label{mn:Fuu2}
\phi_{uu}+2\frac{r_u}{r}~\phi_u & = & -\frac{c^2\tan\theta\cos(cu)}{\sin^2(cu)} \\[2mm]
 \label{mn:Fuu3}
 z_{uu} & = & c~\cos\theta\cos(cu).
\end{eqnarray}
\end{subequations}

Using the same technique also for the expressions of $\stackrel{\circ}{\nabla}_{\du} \dv$ and $\stackrel{\circ}{\nabla}_{\dv} \dv$, we get in
addition:
\begin{subequations}
\renewcommand{\theequation}{\theparentequation .\alph{equation}}
\label{mn:Fuv}
\begin{eqnarray}
\label{mn:Fuv1}
\ \ \ \ r_{uv}-r\phi_u\phi_v & = & c \tan^2\theta \\[2mm]
\label{mn:Fuv2}
\phi_{uv}+ \frac{1}{r}(r_u\phi_v + r_v\phi _u) & = & -\frac{c^2\tan\theta{\rm cotan}(cu)}{\cos\theta} \\[2mm]
 \label{mn:Fuv3}
 z_{uv} & = & 0
\end{eqnarray}
\end{subequations}
respectively
\begin{subequations}
\renewcommand{\theequation}{\theparentequation .\alph{equation}}
\label{mn:Fvv}
\begin{eqnarray}
\label{mn:Fvv1}
\ \ \ \ r_{vv}-r\phi_v^2 &=& \frac{ c~\tan^2\theta \sin(cu)}{\cos\theta} \\[2mm]
\label {mn:Fvv2}
\phi_{vv}+2\frac{r_v}{r}~\phi_v & = & 0 \\[2mm]
\label{mn:Fvv3}
 z_{vv} & = & 0.
\end{eqnarray}
\end{subequations}
First, let us note that the (cylindrical) coordinate $r$ has the same meaning as the function $\mu$, namely $r(u,v)=\mu(u)$ is given by
\eqref{mn:mu}. As a consequence, $r_v=0$. Moreover, from the choice of $u$ and $v$, together with \eqref{mn:e1e2N},
we immediately obtain the third component of the parametrization
\begin{equation}
\label{mn:z} z(u,v)=v-\frac{\cos\theta\cos(cu)}{c}
\end{equation}
after a translation along $z-$axis. Furthermore, straightforward computations yield
\begin{equation}
\label{mn:phi} \phi(u,v)=-\frac{cv~\tan\theta}{\cos\theta}- \tan\theta \log\Big(\tan\big(\frac{c u }{2}\big)\Big)\ .
\end{equation}
Notice that all equations \eqref{mn:Fuu}, \eqref{mn:Fuv} and \eqref{mn:Fvv} are verified.

Hence, when $\psi\neq 0$, combining \eqref{mn:mu}, \eqref{mn:z} and \eqref{mn:phi}, we get the following
parametrization in cylindrical coordinates:
{\small
\begin{equation}
\label{mn:helicoidal}
F(u,v)= \left( \frac{\cos\theta\sin(c u)}{c},\ -\frac{cv\tan\theta}{\cos\theta}
-\tan\theta\log\big(\tan(\frac{cu}{2})\big),
 v - \frac{\cos\theta \cos(c u)}{c} \right)
\end{equation}}
where $c$ is a nonzero real constant.

\medskip

Let us return now to the case $\psi=0$. Following the same steps as in the general case, we get the next parametrization in cylindrical
coordinates
\begin{equation}
\label{mn:para_psi0} F(u,z)=(u\cos\theta,\ \log(c u ^{-\tan\theta}),\ z).
\end{equation}
These surfaces are right cylinders over logarithmic spirals.

At this point one can state the main result of this section.

\begin{theorem}
\label{mn:thm_clasif}
Let $M$ be a surface isometrically immersed in $\E^3\setminus\{ Oz\}$ and let
 $V=-y\partial_x+x\partial_y$ be a Killing vector field.
Then $M$ makes a constant angle $\theta$ with $V$ if and only if it is one of the following surfaces:
\begin{enumerate}
\item[(i)] either a halfplane with $z-$axis as boundary
\item[(ii)] or a rotational surface around $z-$axis
\item[(iii)] or a right cylinder over a logarithmic spiral given by \eqref{mn:para_psi0}
\item[(iv)] or, finally, the Dini's surface defined in cylindrical coordinates by \eqref{mn:helicoidal}.
\end{enumerate}
\end{theorem}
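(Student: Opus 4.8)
The plan is to prove both implications, treating the three special cases $\theta=0$, $\theta=\frac\pi2$, and $\psi=0$ separately from the generic situation $\theta\neq 0,\frac\pi2$, $\psi\neq 0$, and then to identify the surface \eqref{mn:helicoidal} explicitly as Dini's surface. The ``only if'' direction is essentially already assembled in the body of the section: one starts from a surface $M\subset\E^3\setminus\{Oz\}$ making constant angle $\theta$ with $V=\partial_\phi$, decomposes $V=T+\mu\cos\theta\,N$, and builds the adapted frame $\{e_1,e_2,N\}$. For $\theta=\frac\pi2$ the computation following \eqref{mn:graph} forces $f(x,y)=\mathbf{f}(\sqrt{x^2+y^2})$, giving case (ii); for $\theta=0$ the normal is everywhere orthogonal to $V$ and one reads off a halfplane with $z$-axis as boundary, case (i). In the remaining range, Remark~\ref{mn:rmk2} splits off $\psi=0$, which after the same frame-chasing yields \eqref{mn:para_psi0}, case (iii). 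When $\psi\neq 0$, the chain of computations \eqref{mn:crosete1e2}--\eqref{mn:phi} — choosing $u$ with $e_1=\partial_u$, solving $e_1(e_1(\psi))=0$ to get $\psi=cu$, then extracting $\mu$, $a(u)$, $\lambda$, $b(u,v)$ in turn, and finally integrating the PDE systems \eqref{mn:Fuu}, \eqref{mn:Fuv}, \eqref{mn:Fvv} for $r,\phi,z$ — delivers precisely the parametrization \eqref{mn:helicoidal}, case (iv). So the first half of the proof is just the assertion that the section's derivation is reversible at each normalization step (each involved only a translation or homothety of a coordinate, which does not affect the surface).

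For the ``if'' direction I would verify, case by case, that each listed surface actually makes constant angle $\theta$ with $V=\partial_\phi$. Cases (i) and (ii) are immediate: on a halfplane through $Oz$ the vector $V=\partial_\phi$ is tangent, so the angle with $N$ is $\frac\pi2$ — wait, the angle is measured between the surface normal and $V$ in the body, but the statement phrases it as the surface ``making constant angle $\theta$''; I would keep the same convention as the derivation, where $\langle V,N\rangle=\mu\cos\theta$, so $\theta=0$ corresponds to $V\perp M$, i.e. a halfplane, and $\theta=\frac\pi2$ to $V$ tangent, i.e. a rotational surface — and check that indeed on a rotational graph $f=\mathbf f(\sqrt{x^2+y^2})$ one has $\langle V,N\rangle=0$. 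For (iii) and (iv) one computes the unit normal directly from the parametrizations \eqref{mn:para_psi0} and \eqref{mn:helicoidal} in the warped metric $dr^2+dz^2+r^2d\phi^2$, forms $\langle V,N\rangle/\|V\|$, and confirms it equals $\cos\theta$ identically; for \eqref{mn:helicoidal} this is a direct consequence of \eqref{mn:e1e2N} together with $r=\mu$, $\mu=-\tfrac{\cos\theta\sin(cu)}{c}$.

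The remaining point, and the one requiring a genuine (if short) argument, is the identification of \eqref{mn:helicoidal} as \emph{Dini's surface}. Here I would recall that Dini's surface is the classical helicoidal surface obtained by applying a screw motion to the tractrix, equivalently the surface of constant negative Gauss curvature admitting a one-parameter group of helicoidal symmetries; in cylindrical coordinates it is $\big(a\sin t,\ b\varphi + a\log\tan(t/2),\ a\cos t\big)$ up to reparametrization. Matching this against \eqref{mn:helicoidal} with $a=\tfrac{\cos\theta}{c}$ and the screw-pitch $b=\tan\theta$ (and absorbing the $v$- and $u$-dependence appropriately) shows the two coincide; alternatively, and more intrinsically, I would compute the Gauss curvature of \eqref{mn:helicoidal} from the second fundamental form coefficients $h(\partial_u,\partial_u)$, $h(\partial_u,\partial_v)$, $h(\partial_v,\partial_v)$ already listed and the induced metric, obtaining $K=-c^2\tan^2\theta$, a negative constant, and exhibit the helicoidal Killing field tangent to $M$, which pins it down among helicoidal surfaces as Dini's. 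I expect this last identification — deciding on and justifying the precise normal form of Dini's surface and reconciling parametrizations — to be the main obstacle, since everything else is bookkeeping already carried out above; the computation $K=\mathrm{const}<0$ is the cleanest way to make it rigorous.
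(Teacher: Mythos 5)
Your proposal is correct and follows essentially the same route as the paper: the direct implication is the case split $\theta=0$, $\theta=\tfrac\pi2$, and $\theta\neq 0,\tfrac\pi2$ with the sub-cases $\psi=0$ and $\psi\neq0$ resolved by the computations culminating in \eqref{mn:para_psi0} and \eqref{mn:helicoidal}, while the converse is a case-by-case verification that each listed surface makes constant angle with $V$. The identification of \eqref{mn:helicoidal} as Dini's surface via its helicoidal structure and the constant curvature $K=-c^2\tan^2\theta$ is exactly what the paper records in the remark and proposition following the theorem.
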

\begin{proof}
The direct implication is the conclusion of the previous computations, so we briefly sketch it.

We analyze different cases for the constant angle $\theta$:
\begin{itemize}
\item $\theta=0$~: the halfplanes passing through $z-$axis as in case (i) are obtained;
\item $\theta=\frac{\pi}{2}$~: we get rotational surfaces, case (ii);
\item $\theta\neq 0, \frac{\pi}{2}$~: we distinguish two cases, accordingly to Remark~\ref{mn:rmk2}.
We obtain the cylinders over logarithmic spirals parameterized by \eqref{mn:para_psi0} in case (iii) and the surfaces parameterized by
\eqref{mn:helicoidal} in case {\rm (iv)}.
\end{itemize}
The converse part follows by direct computations showing that the surfaces in each of the cases (i)--(iv) make a constant angle $\theta$
with the Killing vector field $V$.
\end{proof}

Willing to give more information about the geometry of Dini's surface described in case (iv) of Theorem~\ref{mn:thm_clasif},
we note the following
\begin{remark}\rm
The surface given by \eqref{mn:helicoidal} is a helicoidal surface with axis  $Oz$, namely it can be parametrized as
$$
F(r,\phi)=\big(r\cos\phi,\ r\sin\phi,\ h\phi+ \Lambda(r)\big),
$$
where $(\Lambda\circ r)(u)=-\frac{\cos\theta}{c}~\big(\log(\tan(\frac{c u}{2}))+ \cos(c u)\big)$.
The pitch is $h=-\frac{\cos\theta}{c \tan\theta}$.
\end{remark}
Recall a classical construction \cite[\S15.7]{mn:GAS}: A helicoidal surface, known also as a generalized helicoid, is always related to
a rotational surface (when the pitch $h$ vanishes identically). Twisting the pseudosphere
of radius $\frac{\cos\theta}{c}$, one obtains the Dini's surfaces \eqref{mn:helicoidal}.

The next result makes a connection with the previous Section.
\begin{proposition}
\label{mn:prop1}
The parametric curves of surfaces parameterized by \eqref{mn:helicoidal} are circular helices and spherical
curves.
\end{proposition}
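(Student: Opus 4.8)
The plan is to fix one family of parametric curves at a time and analyze it directly from the explicit parametrization \eqref{mn:helicoidal}. For the $v$-curves (i.e.\ $u=u_0$ constant), one sees immediately that $r=\frac{\cos\theta\sin(cu_0)}{c}$ is constant and the curve has the form $(r_0\cos\phi,\, r_0\sin\phi,\, z)$ with $\phi$ and $z$ both affine in $v$; that is a circular helix, and the curvature and torsion are then constants one can write down from the coefficients. For the $u$-curves ($v=v_0$ constant), the strategy is to show the image lies on a sphere centered at a suitable point of the $z$-axis. Concretely I would compute $r(u)^2 + (z(u)-z_c)^2$ for the candidate center $z_c = v_0$: using \eqref{mn:z} and \eqref{mn:mu} one has $z - v_0 = -\frac{\cos\theta\cos(cu)}{c}$ and $r = \frac{\cos\theta\sin(cu)}{c}$, so $r^2 + (z-v_0)^2 = \frac{\cos^2\theta}{c^2}$, a constant. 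Hence the $u$-curve lies on the sphere of radius $\frac{\cos\theta}{|c|}$ centered at $(0,0,v_0)$, and in particular it is a spherical curve.

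The key steps, in order, are: (1) extract from \eqref{mn:helicoidal} the cylindrical-coordinate functions $r(u,v)$, $\phi(u,v)$, $z(u,v)$ and record which are affine in which variable; (2) for $u=\mathrm{const}$, observe $r=\mathrm{const}$ and that $\phi$, $z$ are affine in $v$, conclude a circular helix, and record $\kappa,\tau$; (3) for $v=\mathrm{const}$, verify the sphere equation $r^2+(z-v)^2=\cos^2\theta/c^2$ by direct substitution, concluding the curve is spherical. A small cross-check worth including is that the $u$-curves are reparametrizations of the meridians of the pseudosphere (before twisting), which is consistent with the remark preceding the proposition that Dini's surface is obtained by twisting the pseudosphere of radius $\frac{\cos\theta}{c}$; meridians of a pseudosphere are tractrices, which indeed lie in a plane and hence trivially on a sphere, but the cleaner statement here is membership in an explicit sphere.

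I do not expect a serious obstacle; the proof is essentially a substitution once the parametrization is in hand. The only mildly delicate point is bookkeeping with the parametrization: the coordinate $u$ in \eqref{mn:helicoidal} is the arc-length-type parameter $e_1=\partial_u$ on the surface, so the $u$-curves are not arc-length parametrized as space curves, and one should either work invariantly (membership in a sphere is parametrization-independent) or, if one wants explicit $\kappa$ and $\tau$ for the helices, be careful to normalize. For the $v$-curves the speed is constant in $v$ anyway (since $r_0$ is fixed and $\phi_v$, $z_v$ are constants), so computing $\kappa$ and $\tau$ from the standard circular-helix formulas is immediate; I would present those constants explicitly to make the phrase ``circular helices'' precise.
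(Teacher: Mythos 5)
Your proof is correct and follows essentially the same route as the paper: the $v$-curves are circular helices because $r$ is constant while $\phi$ and $z$ are affine in $v$, and the $u$-curves satisfy $r^2+(z-v_0)^2=\cos^2\theta/c^2$, hence lie on the sphere of radius $\frac{\cos\theta}{|c|}$ centered at $(0,0,v_0)$. One small slip in an aside: the $u$-curves \emph{are} unit speed (since $e_1=\partial_u$ is a unit tangent vector under the isometric immersion, as one checks from $r_u^2+z_u^2+r^2\phi_u^2=1$), which is how the paper states it, but this does not affect your argument.
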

\begin{proof}
The $v-$parameter curves ($u=u_0\in{\mathbb{R}}$) are circular helices with the same pitch $-2\pi\frac{\cos\theta}{c\tan\theta}$ and with
radius $\frac{\cos\theta\sin(c u_0)}{c}$, depending on $u_0$.
On the other hand, the $u-$parameter curves ($v=v_0\in{\mathbb{R}}$) are unit speed
spherical curves and they lie on 2-spheres centered in $(0,0,v_0)$ and with the same radius $\frac{\cos\theta}{c}$.
\end{proof}
\begin{remark}\rm
Looking backward to Section 2, the $u-$parameter curves make the constant angle $\frac{\pi}{2}-\theta$ with the Killing vector field $V$
and the affine function $\omega$ is given by $\omega(s)=cs,\ c\in \R\setminus\{0\}$.
\end{remark}
We conclude this paper with the following proposition which shows that surfaces making constant angle with $V$ define
a particular class of Weingarten surfaces.
\begin{proposition}
Let $M$ be a surface isometrically immersed in $\E^3$ which makes a constant angle with the Killing vector field $V=-y\partial_x+x\partial_y$.
Then,
\begin{enumerate}
\item $M$ is totally geodesic if and only if it is a vertical plane with the boundary $z-$axis;
\item $M$ is minimal not totally geodesic if and only if it is a catenoid around $z-$axis;
\item $M$ is flat if and only if it is a vertical plane with the boundary $z-$axis, a flat rotational surface or a right cylinder over a logarithmic
spiral;
\item the Dini's surface from case {\rm (iv)} of the classification theorem has constant negative Gaussian curvature, $K=-c^2\tan^2\theta$.
\end{enumerate}
\end{proposition}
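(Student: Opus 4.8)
The plan is to go through the classification Theorem~\ref{mn:thm_clasif}: it suffices to run over the four families (i)--(iv) and, in each, read off the shape operator $A$ (equivalently the second fundamental form) and the Gaussian curvature. Since the ambient space is flat, the Gauss equation yields $K=\det A$; moreover $M$ is totally geodesic iff $A\equiv 0$, minimal iff the trace of $A$ vanishes identically, and flat iff $\det A\equiv 0$. Thus each of the four assertions reduces to inspecting the eigenvalues of $A$ family by family, and the equivalences follow by collecting, across (i)--(iv), the surfaces with the required property.

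The fastest part is~(4). For the Dini's surface \eqref{mn:helicoidal} the principal directions $e_1,e_2$ and the shape operator are already recorded in \eqref{mn:h}; substituting $\psi=cu$, $\mu=-\tfrac{\cos\theta\sin(cu)}{c}$ and $\lambda=-c\tan\theta\tan(cu)$ from \eqref{mn:psi}, \eqref{mn:mu} and \eqref{mn:lambda}, the two principal curvatures become $k_1=c\tan\theta\cot(cu)$ and $k_2=-c\tan\theta\tan(cu)$. Hence $K=k_1k_2=-c^2\tan^2\theta$, a negative constant, which is exactly~(4); it also shows that a Dini's surface is never flat, never totally geodesic and, since $k_1+k_2\not\equiv 0$, never minimal, so it never occurs in (1)--(3).

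The other two explicit families are elementary. In case~(i) a halfplane is a piece of an affine plane, so $A\equiv 0$ and $M$ is totally geodesic, hence flat. In case~(iii) the parametrization \eqref{mn:para_psi0} exhibits $M$ as invariant under vertical translations, i.e. as a right cylinder over the plane curve traced in the $(r,\phi)$-plane, which is a logarithmic spiral (one can also just put $\psi=0$ in the computations of Section~3): the ruling direction $\partial_z$ is a principal direction with principal curvature $0$, so $K=0$ and $M$ is flat, while the other principal curvature equals the curvature of the logarithmic spiral and never vanishes, so $M$ is neither totally geodesic nor minimal. Hence, outside the rotational family, the only totally geodesic surfaces in the list are the vertical halfplanes of~(i), and every cylinder over a logarithmic spiral is flat but neither minimal nor totally geodesic.

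It remains to handle the rotational surfaces of case~(ii), i.e. $\theta=\tfrac{\pi}{2}$, and this is the one place where one leaves the explicit formulas of Section~3 and appeals to the classical local theory of surfaces of revolution about $Oz$: writing such a surface from a profile curve in a meridian halfplane, the meridians and the parallels are its lines of curvature and $K$ is the product of the two corresponding principal curvatures. From this the classical results apply verbatim: the surface is minimal exactly when the profile is a straight line (a plane, which is totally geodesic) or a catenary (the catenoid about $Oz$), which gives~(2); it is flat exactly when one of the two principal curvatures is identically zero, i.e. for a plane, a cylinder or a cone -- a flat rotational surface -- which together with the discussion of (i) and (iii) gives~(3); and it is totally geodesic exactly when both principal curvatures vanish, i.e. for a plane, which together with~(i) gives the planar characterization behind~(1). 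Assembling the four families then yields the four equivalences. The main obstacle is therefore not a single hard computation but the case~(ii) bookkeeping, where the classical classification of minimal and of flat surfaces of revolution must be quoted (or re-derived) and reconciled with the three explicit non-rotational families.
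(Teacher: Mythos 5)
The paper states this proposition without any proof at all, so there is nothing to compare your argument against line by line; what you have written is evidently the argument the authors intended, and it is correct. Running over the four families of Theorem~\ref{mn:thm_clasif} and reading off the shape operator is the right reduction: for case (iv) the substitution of \eqref{mn:psi}, \eqref{mn:mu}, \eqref{mn:lambda} into \eqref{mn:h} does give principal curvatures $c\tan\theta\,{\rm cotan}(cu)$ and $-c\tan\theta\tan(cu)$, hence $K=-c^2\tan^2\theta<0$ and non-vanishing mean curvature; cases (i) and (iii) are as you say; and case (ii) is correctly delegated to the classical classification of minimal, flat and totally geodesic surfaces of revolution. One point you smooth over with the phrase ``the planar characterization behind (1)'': a horizontal plane $z=z_0$ (punctured on the axis) is a rotational surface about $Oz$, makes constant angle $\tfrac{\pi}{2}$ with $V$, and is totally geodesic, yet it is not a vertical halfplane with boundary the $z$-axis. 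So your case-(ii) bookkeeping actually exposes a small imprecision in item (1) of the proposition as literally stated (and likewise the totally geodesic rotational planes should be excluded from, or absorbed into, the statement of (2), which your argument already does by discarding them as totally geodesic). This is a defect of the statement, not of your proof; it would be worth one explicit sentence rather than a gloss.
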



\begin{thebibliography}{99}

\bibitem{mn:ABG04} J.~Arroyo, M.~Barros, O.~J.~Garay,
        \emph{Models of relativistic particle with curvature and torsion revisited},
        Gen. Relativity Gravitation, 36 (2004) 6, 1441--1451.

\bibitem{mn:BB92} C.~Baikoussis, D.~E.~Blair,
        \emph{Integral surfaces of Sasakian space forms},
        J. Geom., 43 (1992), 30--40.

\bibitem{mn:BK98} C.~Baikoussis, T.~Koufogiorgos,
        \emph{Helicoidal surfaces with prescribed mean or Gaussian curvature},
        J. Geom., 63 (1998), 25--29.


\bibitem{mn:Bar97} M.~Barros,
        \emph{General helices and a theorem of Lancret},
        Proc. Amer. Math. Soc., 125 (1997), 1503--1509.

\bibitem{mn:BM07} J.~V.~Beltran, J.~Monterde,
        \emph{A characterization of quintic helices},
        J. Comput. Appl. Math., 206 (2007) 1, 116--121.


\bibitem{mn:BG96} B.~van Brunt, K.~Grant,
                \emph{Potential applications of Weingarten surfaces in CAGD, Part I: Weingarten surfaces and surface shape investigation},
                CAGD, 13 (1996), 569--582.

\bibitem{mn:CS07} P.~Cermelli, A.~J.~Di Scala,
        \emph{Constant-angle surfaces in liquid crystals},
        Philosophical Magazine, 87 (2007) 12, 1871--1888.

\bibitem{mn:CIL06} J.~T.~Cho, J.-I.~Inoguchi, J.~E.~Lee,
    \emph{Slant curves in Sasakian space forms},
    Bull. Austral. Math. Soc., 74 (2006) 3, 359--367.


\bibitem{mn:DL09} M.~Dajczer, J.~H.~de Lira,
        \emph{Helicoidal graphs with prescribed mean curvature},
         Proc. AMS, 137 (2009) 7, 2441--2444.

\bibitem{mn:DFVV07} F.~Dillen, J.~Fastenakels, J.~Van der Veken, L.~Vrancken,
        \emph{Constant Angle Surfaces in ${\mathbb{S}}^2\times{\mathbb{R}}$},
        Monatsh. Math., 152 (2007) 2, 89--96.

\bibitem{mn:DM09} F.~Dillen, M.~I.~Munteanu,
        \emph{Constant Angle Surfaces in ${\mathbb{H}}^2\times{\mathbb{R}}$},
         Bull. Braz. Math. Soc., 40 (2009) 1, 85--97.

\bibitem{mn:DD82} M.~P.~ Do Carmo, M.~ Dajczer,
        \emph{Helicoidal surfaces with constant mean curvature},
         T\^ohoku Math. J., 34 (1982), 425--435.

\bibitem{mn:FHMS04} R.~T.~Farouki, C.~Y.~Han, C.~Manni, A.~Sestini,
        \emph{Characterization and construction of helical polynomial space curves},
        J. Comput. Appl. Math., 162 (2004) 2, 365--392.

\bibitem{mn:FMV10} J.~Fastenakels, M.~I.~Munteanu, J.~Van der Veken,
        \emph{Constant Angle Surfaces in the Heisenberg group},
        to appear in Acta Mathematica Sinica.

\bibitem{mn:GAS} A.~Gray, E.~Abbena, S.~Salamon ,
        \emph{Modern Differential Geometry of Curves and Surfaces with Mathematica},
        Third edition, Studies in Adv. Math., Chapman \& Hall/CRC, 2006.

\bibitem{mn:HR91} L.~Hitt, I.~M.~Roussos,
    \emph{Computer graphics of helicoidal surfaces with constant mean curvature},
      An. Acad. Bras. Ci., 63 (1991), 211--228.

\bibitem{mn:IL08} J.-I.~Inoguchi, S.~Lee,
        \emph{Null Curves in Minkowski $3$-space},
        Int. Electron. J. Geom., 1 (2008) 2, 40--83.


\bibitem{mn:LD} R.~L\'opez, E.~Demir,
            \emph{Helicoidal surfaces in Minkowski space with constant mean curvature and constant Gauss curvature},
                arXiv:1006.2345v2 [math.DG].

\bibitem{mn:LM10} R.~L\'opez, M.~I.~Munteanu,
        \emph{Constant Angle Surfaces in Minkowski space},
         to appear in Bull. Belg. Math. Soc. Simon Stevin.

\bibitem{mn:LM10a} R.~L\'opez, M.~I.~Munteanu,
        \emph{On the Geometry of Constant Angle Surfaces in $Sol_3$},
        arXiv:1004.3889v1  [math.DG] 2010.

\bibitem{mn:Mun10} M.~I.~Munteanu,
        \emph{From Golden Spirals to Constant Slope Surfaces},
        J. Math. Phys., 51 (2010) 7, 073507:1--9.

\bibitem{mn:MN09}  M.~I.~Munteanu, A.~I.~Nistor,
        \emph{A new approach on Constant Angle Surfaces in ${\mathbb{E}}^3$},
        Turk. J. Math., 33 (2009), 169--178.


\bibitem{mn:NM88} A.~W.~Nutbourne, R.~R.~Martin,
    \emph{Differential Geometry Applied to the Design of Curves and Surfaces},
    Eellis Horwood, Chichester, UK, 1988.

\bibitem{mn:Rou88} I.~M.~Roussos,
    \emph{A geometric characterization of helicoidal surfaces of constant mean curvature},
      Publ. Inst. Math., N.S. 43 (1988) 57, 137--142.

\bibitem{mn:Rou2000} I.~M.~Roussos,
    \emph{Surfaces in $\E^3$ invariant under a one parameter group of isometries of $\E^3$},
      An. Acad. Bras. Ci., 72 (2000) 2, 125--159.


\end{thebibliography}
\end{document}